\numberwithin{equation}{section}
\theoremstyle{plain}
\newcommand{\cid}{\stackrel{d}{\longrightarrow}}
\newcommand{\civ}{\stackrel{v}{\longrightarrow}}
\newcommand{\toi}{\to\infty}
\newcommand{\eind}{\stackrel{d}{=}}
\newcommand{\dint}{\displaystyle\int}
\shorttitle{On randomly spaced observations} % insert short title here for use in running head
\begin{document}

\title{On randomly spaced observations and continuous time random walks} % insert title - use \\ if it requires more than one line.

\authorone[University of Zagreb]{Bojan Basrak} % Affiliation is just the name of your university or institution
\addressone{Department of Mathematics, University of Zagreb, Bijeni\v cka 30, Zagreb, Croatia} % Your postal address goes here.

\authortwo[University of Zagreb]{Drago \v Spoljari\'c}
\addresstwo{Faculty of Mining, Geology and Petroleum Engineering, University of Zagreb, Pierottijeva 6, Zagreb, Croatia}

\begin{abstract}

%Randomly spaced observations appear naturally in various applied stochastic models. In this article, we consider

We consider random variables observed at arrival times of a renewal process, which possibly depends on those observations and has regularly varying steps with infinite mean. 
Due to the dependence and heavy tailed steps,
the limiting behavior of extreme observations until a given time $t$
 tends to be rather involved. 
We describe this asymptotics % for all order statistics
and  generalize several partial results which appeared in this
setting. In contrast to the earlier studies, our analysis is based in the point processes theory. The theory is applied to determine the asymptotic distribution of maximal excursions and sojourn times for continuous time random walks.

\end{abstract}

\keywords{extreme value theory; point process; renewal process;  continuous time random walk; excursion; sojourn time } % insert keywords separated by a semicolon

\ams{60G70}{60F17; 60G55; 60F05} % insert the primary Maths Subject Classification number in the first bracket
         % and the secondary ams number(s) in the second bracket
         % e.g. \ams{60E20}{49G03;49F10}

\section{Introduction} % Initial capital letter, then lower case. No full stop.

A sequence of 
 observations collected  at arrival times of a renewal process
represents a popular modeling framework in applied probability.  
  It underlies many standard stochastic models, ranging from
risk theory and engineering to theoretical physics.
The asymptotic distribution of the maximum of such observations until a given time $t$
is well understood in the case of the renewal process with finite mean interarrival times, see  Basrak and \v Spoljari\'c \cite{BSSPL} and references therein. 
Shanthikumar and Sumita in \cite{SS83} and Anderson in \cite{Anderson87}
considered the problem in the infinite mean case, allowing
certain degree of dependence between the observations and the interarrival  times.
In such a setting,
the asymptotic theory for partial maxima tends to be
more complicated.

It is often essential to understand the distribution of all 
extreme observations and not merely the partial maximum until a given time.
Therefore, we aim to characterize the limiting behavior of all 
upper order statistics in a sequence $(X_n)$ up to a time $\tau(t)$, where $(\tau(t))$ represents the renewal process generated by a sequence of nonnegative (and nontrivial) random variables $(Y_n)$, i.e. 
\begin{equation}\label{def_renewal_process}
\tau(t) = \inf \{ k: Y_1+\cdots + Y_k >t \}\,,
\quad \mbox{ for } t\geq 0\,.
\end{equation}
Throughout we shall assume that $(X_n,Y_n),\, n\geq 1,$ form iid pairs and that the distribution of $X_1$ belongs to 
the maximum domain of attraction (MDA for short) of one of the three extreme value distributions, denoted by $G$. Moreover, the interarrival times
are assumed to be regularly varying with index $\alpha \in (0,1)$
(as in ~\cite{Anderson87,MS2004,PMM2009}).
In particular, $Y_i$'s have infinite mean and belong to the MDA of 
a Fr\'echet distribution themselves.

To explain the  limiting behavior of all large values in the sequence $(X_n)$, which
arrive before time  $t$, we rely on the theory of point processes.
Such an approach seems to be new in this context. 
It does not only yield more general results, but
we believe, it provides a very good insight into why all the previously established results actually hold.
 Moreover, we  relax restrictions
 used in the literature concerning dependence
between the observations and interarrival times.

We apply our results to study
the continuous time random walk (CTRW),  introduced by Montroll and Weiss~\cite{Montroll65}. It is essentially  a random walk subordinated to a renewal process.
It has numerous applications in physics  and has been used to model various phenomena in finance, hydrology, quantum transport theory and seismology. For an overview of the literature on the theory and various applications of CTRW we refer to ~\cite{M3ExtremalCCTRW2011}.
Under mild  assumptions, excursions of such a walk are regularly
varying with index $1/2$. Hence, our theory applies, and one can 
determine  the limiting distribution of extremely long excursions and 
sojourn times at level zero of a CTRW.

The paper is organized as follows: notation and auxiliary results are introduced in section~\ref{section_prelims}. In section~\ref{Sec:Ext}, we present the limiting distribution
of all extreme observations  until a given time, discussing
 asymptotic tail independence
and asymptotic full tail dependence between the observations and interarrival times in detail.
Our main result extends previously mentioned results in this context,  as well as  some more recent results in  Meerschaert and Stoev~\cite{MStoev2009} and Pancheva et al.~\cite{PMM2009} for instance. 
 In the asymptotic full tail dependence case. our result could  be  applied to study the longest intervals of
the renewal process $\tau$ itself,  which is the subject of  recent
papers by Goldr\`eche et al.~\cite{GMS} and Basrak~\cite{BB15}.
In section~\ref{Sec:CTRW} we apply the main result to study the longest sojourn times and the longest excursions of a continuous time random walk. In particular, this section extends the analysis of asymptotic distribution for the  ranked lengths of excursions in the simple symmetric random walk given by
   Cs\'aki and Hu~\cite{Csaki2003}.

%
%\begin{itemize}
%\item \v STO JE JO\v S NOVO U \v CLANKU?
%\item MOTIVACIJA ZA LONGEST EXCURSION U \v CLANKU IZ J. STATISTICAL MECHANICS (nedavni \v clanak s arxiva)
%\end{itemize}

\section{Preliminaries}\label{section_prelims}

As already mentioned in the introduction, we assume that the
observations $X_i$ belong to MDA of some extreme value distributions, denoted by $G$. Because of the correspondence between MDA's of Fr\'echet and Weibull distributions, we discuss only observations in the Gumbel (which is denoted by $G=\Lambda$) and Fr\'echet ($G=\Phi_\beta$, for $\beta>0$) MDA's in detail (see subsection 3.3.2 in Embrechts et al.~\cite{EKMikosch}).
 In particular, there exist functions $a(t)$ and $b(t)$ such that 
\begin{equation}\label{MDA_condition}
 t P( X_1 > a(t) x + b(t)) \to -\log G(x)\,, 
\end{equation}
as $t\toi$ (cf. Resnick~\cite{ResEVRVPP}).

Recall next that we assumed that the renewal steps $Y$ have regularly varying distribution of  infinite mean with index $\alpha\in(0,1)$ (we denote this by $Y\sim\mathit{RegVar}(\alpha)$). In such a case, it is well known (see Feller~\cite{FellerVol2}) that there exists a strictly positive sequence $(d_n)$ such that
\[
d_n^{-1}(Y_1+\cdots +Y_n)\cid S_\alpha\,,
\]
where random variable $S_\alpha$ has the stable law with the index $\alpha$, scale parameter $\sigma=1$, skewness parameter $\beta=1$ and  shift parameter $\mu=0$. In particular, $S_\alpha$ is strictly positive a.s. The sequence $(d_n)$ can be chosen such that 
\begin{equation}\label{def_d(t)}
n(1-F_Y(d_n))\to 1\,,
\end{equation}
as $n\toi$, where $F_Y$ denotes cdf of $Y_1$.  
Denote $d(t)=d_{\lfloor t\rfloor}$, for $t \geq 0$, with $d_0=0$ and observe that the function $d$ is regularly varying with index $1/\alpha$. Consider the partial sum process
\begin{equation}\label{def_T(t)}
 T(t)=\sum_{i=1}^{\lfloor t\rfloor}Y_i \,,
\end{equation}
with $T(t)=0$ for $0\leq t<1$. It is well known that 
\begin{equation}\label{def_S_alpha}
 \left(\frac{T(tc)}{d(t)}\right)_{c\geq 0} \cid (S_\alpha(c))_{c\geq 0}\,,
\end{equation}
as $t\toi$, in a space of c\`adl\`ag functions $D[0,\infty)$ endowed with Skorohod $J_1$ topology (see Skorohod~\cite{Skorohod57} or Resnick~\cite{ResHTP}). The limiting process $(S_\alpha(c))_{c \geq 0}$ is an  $\alpha$--stable process with  
strictly increasing sample paths. 

Recall that for a function $z\in D([0,\infty),[0,\infty))$, the right--continuous generalized inverse is defined by the relation
\begin{equation*}
z^{\leftarrow}(u)=\inf\{s\in[0,\infty):z(s)>u\} \,, \quad u \geq 0\,.
\end{equation*}

The following lemma  shows that under certain conditions, 
the convergence of  functions $z_t $ to $ z $ in $J_1$ topology
implies the convergence of corresponding
generalized inverses in the same topology.
The content of the lemma seems to be known (cf. Resnick~\cite[p. 266]{ResHTP} and Theorem 7.2 in Whitt~\cite{Whitt80}), 
for convenience we add a short proof.

\begin{lemma}\label{lemma_gen_inverse_cvg}
Suppose that $z_t\in D([0,\infty),[0,\infty))$ are non-decreasing functions for all $t \geq 0$. Further, let $z\in D([0,\infty),[0,\infty))$ be strictly increasing to infinity.  If
$
z_t\stackrel{J_1}{\longrightarrow} z,
$
as $t\toi$, then 
$
z_t^{\leftarrow}\stackrel{J_1}{\longrightarrow} z^{\leftarrow},
$
as $t\toi$, in $D([0,\infty),[0,\infty))$ as well. 
\end{lemma}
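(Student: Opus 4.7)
The plan is to reduce the claimed $J_1$-convergence of inverses to a Vervaat-type uniform convergence, by passing through the time-change characterization of the Skorokhod topology.

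First, $J_1$ convergence of $z_t$ to $z$ in $D([0,\infty),[0,\infty))$ yields strictly increasing continuous bijections $\lambda_t:[0,\infty)\to[0,\infty)$ with $\lambda_t\to\mathrm{id}$ and $z_t\circ\lambda_t\to z$, both uniformly on compacts. Since $z_t$ is non-decreasing and $\lambda_t$ is a strictly increasing continuous bijection, one checks directly that
\[
z_t^{\leftarrow}=\lambda_t\circ(z_t\circ\lambda_t)^{\leftarrow}.
\]
Because $\lambda_t\to\mathrm{id}$ uniformly on compacts, it is enough to show that $(z_t\circ\lambda_t)^{\leftarrow}\to z^{\leftarrow}$ locally uniformly.

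The second step exploits that, since $z$ is strictly increasing, $z^{\leftarrow}$ is continuous on $[0,\infty)$: a jump of $z$ at $s_0$ produces only a flat interval $[z(s_0-),z(s_0))$ on which $z^{\leftarrow}\equiv s_0$, and creates no jump of $z^{\leftarrow}$. For fixed $u\ge 0$ and $\epsilon>0$, strict monotonicity of $z$ yields a $\delta>0$ with $z(z^{\leftarrow}(u)-\epsilon)<u-\delta$ and $z(z^{\leftarrow}(u)+\epsilon)>u+\delta$, so uniform convergence of $z_t\circ\lambda_t$ to $z$ on a sufficiently long interval forces $|(z_t\circ\lambda_t)^{\leftarrow}(u)-z^{\leftarrow}(u)|\le\epsilon$ for all large $t$. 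Pointwise convergence of the non-decreasing functions $(z_t\circ\lambda_t)^{\leftarrow}$ to the continuous $z^{\leftarrow}$ then upgrades automatically to uniform convergence on compacts (a classical consequence of monotonicity with a continuous limit), and combining with $\lambda_t\to\mathrm{id}$ yields the desired $J_1$ convergence of $z_t^{\leftarrow}$ to $z^{\leftarrow}$, in fact uniformly on compacts.

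The main obstacle is the pointwise convergence of the inverses at values $u$ coinciding with the height of a jump of $z$, i.e.\ $u\in[z(s_0-),z(s_0))$ with $z^{\leftarrow}(u)=s_0$. The lower bound $z(s_0-\epsilon)<u$ depends essentially on strict monotonicity: without it, $z^{\leftarrow}$ could itself be discontinuous and both the pointwise convergence and the monotone-to-continuous upgrade to uniform convergence would collapse. The remaining points, in particular the globalization from large compact intervals to all of $[0,\infty)$ using that $z(T)\to\infty$, are routine bookkeeping inside the Skorokhod framework.
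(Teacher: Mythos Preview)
Your argument is correct. Both proofs start from the observation that strict monotonicity of $z$ makes $z^{\leftarrow}$ continuous, and both ultimately reduce $J_1$-convergence of the inverses to pointwise convergence; the difference lies in how each step is carried out. The paper's proof is a two-line citation argument: continuity of $z^{\leftarrow}$ allows one to invoke Theorem~VI.2.15 of Jacod--Shiryaev (pointwise convergence to a continuous limit suffices for $J_1$), and pointwise convergence $z_t^{\leftarrow}(u)\to z^{\leftarrow}(u)$ is then read off from Proposition~VI.2.11 of the same reference, which gives continuity of $y\mapsto y^{\leftarrow}(u)$ at $z$. Your route is self-contained: you pass through the time-change description of $J_1$, use the algebraic identity $z_t^{\leftarrow}=\lambda_t\circ(z_t\circ\lambda_t)^{\leftarrow}$ to transfer the problem to locally uniform approximants of $z$, prove pointwise convergence of the inverses by a direct $\varepsilon$--$\delta$ argument exploiting strict monotonicity, and then upgrade via the P\'olya-type principle (monotone functions converging pointwise to a continuous limit converge locally uniformly). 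What the paper's approach buys is brevity and a clean reference trail; what your approach buys is independence from Jacod--Shiryaev and a slightly stronger conclusion, namely locally uniform convergence of $z_t^{\leftarrow}$ rather than merely $J_1$.
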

\begin{proof}
Since $z$ is strictly increasing, $z^{\leftarrow}$ is continuous.  According to  Theorem 2.15 in Jacod and Shiryaev~\cite[Chapter VI]{JacodLTSP}, it sufficies to show 
$
z_t^{\leftarrow}(u)\to z^{\leftarrow}(u),
$ for all $u\geq 0$. 
One can prove this by showing that, for an arbitrary fixed $u\geq 0$,  the function
$
 y \mapsto y^{\leftarrow}(u)
$ 
 is continuous at $z$.
However, this follows at once from Proposition 2.11 in~\cite[Chapter VI]{JacodLTSP}.
Note that this proposition is actually proved using the left--continuous generalized inverse, but  under our assumptions, the proof can be easily adapted to the right--continuous case.
\end{proof}

 According to Seneta~\cite{Seneta76} (see also~\cite{MS2004,ResHTP}),  there exists a function $\widetilde{d}$ which is an asymptotic inverse of $d$, that is 
\begin{equation}\label{d_asympt_eq}
d(\widetilde{d}(t))\sim \widetilde{d}(d(t))\sim t\,,
\end{equation}
as $t\toi$. Moreover, $\widetilde{d}$ is known to be a regularly varying function with index $\alpha$. 

Denote by
\begin{equation}\label{eq:Walfa}
W_\alpha(c)=\inf\{x:S_\alpha(x)>c\}=S_\alpha^{\leftarrow}(c)\,,
\quad c\geq 0\,,
\end{equation}
the first hitting-time process of the $\alpha$-stable subordinator $(S_\alpha(t))_{t\geq 0}$.
As we shall see in the sequel (see~\eqref{thm_inf_mean_kvg_sums:proof_1}),
 Lemma~\ref{lemma_gen_inverse_cvg} together with (\ref{def_S_alpha}) and (\ref{d_asympt_eq}) implies 
\begin{equation}\label{kvg_first_hitting_time}
\left(\frac{\tau(tc)}{\widetilde{d}(t)}\right)_{c\geq 0} \cid (W_\alpha(c))_{c\geq 0}\,,
\end{equation}
in $D([0,\infty),[0,\infty))$ endowed with $J_1$ topology.
%, where $\tau(c)=T^{\leftarrow}(c)$ (see (\ref{def_renewal_process})). 
For an
$\alpha$--stable process $S_\alpha$ and fixed $c\geq 0$, the hitting-time  $W_\alpha(c)$ has the Mittag-Leffler distribution.
% %(see~\cite{Anderson87,MS2004}) 
%with Laplace-Stieltjes transform
%\[ E(e^{-sW_\alpha(c)})=\sum_{n=0}^{\infty}\frac{(-sc^{\alpha})^{n}}{\Gamma(1+n\alpha)}\,.\]

\section{Extremes of randomly spaced observations}
\label{Sec:Ext}

With observations $(X_n)$ and 
interarrival times $(Y_n)$ dependent, 
it can be  difficult to describe explicitly limiting
behavior of observations $X_i$
 with index $i \leq \tau(t)$.
Here, we show that this can be done using the convergence of suitably chosen point processes based on iid random vectors
 $(X_n,Y_n)$.
Convergence of this type is well understood
in the extreme value theory, see
for instance Resnick~\cite{ResEVRVPP}.

Again, we assume that $X_1\in\mathrm{MDA}(G)$ and $Y_1\sim \mathit{RegVar}(\alpha)$, $0<\alpha<1$. For $t \geq 0$ we define point process $N_t$ as
\begin{equation}\label{def_Nt_tilde}
N_t=\sum_{i\geq 1}\delta_{\left(\frac{i}{\widetilde{d}(t)},\widetilde{X}_{t,i},\widetilde{Y}_{t,i}\right)}\,,
\end{equation}
with $N_0=0$, where 
\begin{equation}\label{def_tilde_Y_ti}
\widetilde{Y}_{t,i}=\frac{Y_i}{t} \,,
\end{equation}
and
$\widetilde{X}_{t,i}$ is defined 
 by
\begin{equation}\label{def_tilde_X_ti}
\widetilde X_{t,i}=\frac{X_i-\widetilde{b}(t)}{\widetilde{a}(t)}\,,
\end{equation}
with $\widetilde{a}(t):=a(\widetilde{d}(t))$, $\widetilde{b}(t):=b(\widetilde{d}(t))$ where $a(t)$ and $b(t)$ satisfy (\ref{MDA_condition})
and $\widetilde{d}(t)$ is defined in $(\ref{d_asympt_eq})$.

The state space
of $N_t$ %and the affine transformation of $X_1$ 
 depends on the MDA of the observations (see (\ref{MDA_condition})),
 it can be written as $[0,\infty)\times 
\mathbb{E}$ where 
\begin{equation*}
\mathbb{E}=\begin{cases}
[-\infty,\infty]\times [0,\infty] \setminus \{(-\infty,0)\}, & \ X_1\in\mathrm{MDA}(\Lambda)\\
[0,\infty]\times [0,\infty] \setminus \{(0,0)\} , & \ X_1\in\mathrm{MDA}(\Phi_\beta)
\end{cases}\,.
\end{equation*} 
Throughout we use the standard vague topology on the space of  point measures
 $M_p([0,\infty)\times \mathbb{E})$, see Resnick~\cite{ResEVRVPP}.

Recall that
\begin{equation}\label{def_mu_0}
\widetilde{d}(t)P\left((\widetilde{X}_{t,i},\widetilde{Y}_{t,i})\in \cdot\right)\stackrel{v}{\to} \mu_0(\cdot)\,, \ \mbox{ as } t\toi\,,
\end{equation}
is necessary and sufficient for
\begin{equation*}
N_t\cid N\,,
\end{equation*}
as $t\toi$, where $N$ is Poisson random measure with mean measure $\lambda\times \mu_0$, denoted by $\mathrm{PRM}(\lambda\times \mu_0)$ (see Proposition 3.21 in Resnick~\cite{ResEVRVPP} or Theorem 1.1.6 in de Haan and Ferreira~\cite{deHaan_Ferreira}).
It turns out that \eqref{def_mu_0} is immediate when one considers  the observations
 independent of interarrival times. The same holds in the important
special case when they
are exactly equal.
In that case, one can prove an interesting invariance principle concerning
 extremely  long steps of a renewal 
process, see Basrak~\cite{BB15} and references therein.

\begin{thm}\label{thm_dependent_general}
Assume that   \eqref{def_mu_0} holds,
then
\begin{equation}\label{thm_dep_general:claim}
\left(N_t,
\frac{T(\widetilde{d}(t)\cdot)}{t},
\frac{\tau(t)}{\widetilde{d}(t)}\right)
\cid (N,S_\alpha(\cdot), W_\alpha ) \,,
\end{equation}
 as $t\toi$,
in the product space $M_p \times D[0,\infty)  \times  \mathbb{R} $ and the corresponding  product topology (of  vague,  $J_1$ and Euclidean topologies),
where 
$N$ is a $\mathrm{PRM}(\lambda\times\mu_0)$,
while $S_\alpha$ and 
 $W_\alpha=W_\alpha(1)$ 
denote the $\alpha$--stable subordinator and the first passage time from \eqref{def_S_alpha} and \eqref{kvg_first_hitting_time} respectively.
\end{thm}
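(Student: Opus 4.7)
The plan is to prove the triple convergence in two stages: first to establish joint convergence of the point process $N_t$ together with the rescaled partial sum process, and then to read off the third coordinate via the generalized inverse, using Lemma~\ref{lemma_gen_inverse_cvg}. The identity that glues everything together is the exact relation
$$\frac{\tau(t)}{\widetilde{d}(t)} \;=\; \left[\frac{T(\widetilde{d}(t)\cdot)}{t}\right]^{\leftarrow}(1),$$
so once the $D[0,\infty)$-convergence of the middle coordinate is secured in $J_1$, the third coordinate will follow from a continuous-mapping argument at a point where the limit $W_\alpha$ is almost surely continuous.

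For the first stage, $N_t \cid N$ is immediate from~\eqref{def_mu_0} via Proposition~3.21 in Resnick~\cite{ResEVRVPP}. To couple this with the partial sum process, I would use the representation
$$\frac{T(\widetilde{d}(t)c)}{t} \;=\; \int y\,\mathbbm{1}\{s\leq c\}\,N_t(ds,dx,dy),$$
which exhibits $T(\widetilde{d}(t)\cdot)/t$ as a measurable functional of $N_t$. This functional is not vaguely continuous because the small $y$-coordinates accumulate, so one truncates: for $\varepsilon>0$ set $T_t^\varepsilon(c) := \int y\,\mathbbm{1}\{s\leq c,\,y>\varepsilon\}\,N_t(ds,dx,dy)$, and define $S_\alpha^\varepsilon$ analogously from $N$. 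Since the $y$-marginal of $\mu_0$ is the L\'evy measure of the $\alpha$-stable subordinator, $\mu_0(\{y>\varepsilon\})$ is finite, so $T^\varepsilon$ is continuous on a set of full $N$-measure and the continuous mapping theorem gives $(N_t,T_t^\varepsilon)\cid(N,S_\alpha^\varepsilon)$.

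Letting $\varepsilon\downarrow 0$ requires controlling the truncation error $R_t^\varepsilon := T(\widetilde{d}(t)\cdot)/t - T_t^\varepsilon$; because $R_t^\varepsilon(c)$ is non-decreasing in $c$, it suffices to bound it at the right endpoint $C$ of a compact time interval. Karamata's theorem applied to $F_Y\in \mathit{RegVar}(\alpha)$ with $\alpha<1$ yields $E[Y_1\mathbbm{1}\{Y_1\leq x\}]\sim \frac{\alpha}{1-\alpha}\,x(1-F_Y(x))$, and combined with $\widetilde{d}(t)(1-F_Y(t))\to 1$ this gives $E[R_t^\varepsilon(C)] \to \frac{\alpha\varepsilon^{1-\alpha}}{1-\alpha}\,C$, which vanishes as $\varepsilon\downarrow 0$. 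The corresponding smallness of $S_\alpha-S_\alpha^\varepsilon$ is standard and uses $\alpha<1$ so that there is no compensating drift. The usual Billingsley approximation (Theorem~3.2) then upgrades the truncated joint convergence to $(N_t,T(\widetilde{d}(t)\cdot)/t)\cid(N,S_\alpha)$. Finally, $S_\alpha$ is almost surely strictly increasing to infinity, so Lemma~\ref{lemma_gen_inverse_cvg} together with the a.s.\ continuity of $W_\alpha$ at $1$ allows the continuous map $(\mu,z)\mapsto(\mu,z,z^{\leftarrow}(1))$ to produce~\eqref{thm_dep_general:claim}. The main obstacle throughout is the uniform-in-$t$ smallness of $R_t^\varepsilon$: this is precisely where the regular variation of $Y$ and the restriction $\alpha\in(0,1)$ enter in an essential way, while the rest of the argument is a continuous-mapping exercise.
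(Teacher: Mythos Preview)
Your proposal is correct and follows essentially the same route as the paper: obtain joint convergence of $(N_t,\,T(\widetilde{d}(t)\cdot)/t)$ first, then read off the third coordinate through the exact identity $\tau(t)/\widetilde{d}(t)=\bigl(T(\widetilde{d}(t)\cdot)/t\bigr)^{\leftarrow}(1)$ via Lemma~\ref{lemma_gen_inverse_cvg} and continuous mapping. The only difference is one of presentation: the paper abbreviates the first stage by pointing to Theorem~7.1 in Resnick~\cite{ResHTP}, whereas you have unpacked that reference into the standard truncate--apply continuous mapping--control the remainder by Karamata argument, which is precisely what that theorem does.
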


\begin{remark} \label{rem:prva}
If one denotes the limiting point process above as
$N= \sum_i \delta_{(T_i,P_i,Q_i)}$, then $S_\alpha$ has representation 
\[
 S_\alpha(t) = \sum_{T_i\leq  t } Q_i\,,
\]
while $W_\alpha(c)$ is the inverse of the increasing process
$S_\alpha$ as in \eqref{eq:Walfa}.
Actually, due to lemma~\ref{lemma_gen_inverse_cvg},
we prove a stronger result than the theorem above claims.
Namely, under the same assumption, \eqref{thm_dep_general:claim}
holds jointly with
\eqref{kvg_first_hitting_time}.
%Roughly speaking, $(W_\alpha(c))_{c \geq 0}$ represents an inverse of cumulative jump process determined by the last coordinate of the process $\widetilde{N}$, which is known to be a strictly positive $\alpha$-stable L\'evy motion. In some cases one can describe their joint distributional behaviour (see subsections 
%????).
\end{remark}

\begin{proof}
The first part of the proof is standard, and essentially follows the lines of the proof of Theorem 7.1 in Resnick~\cite{ResHTP}.
Observe that for $s,t \geq 0$ and $T(\cdot)$ in (\ref{def_T(t)})
\[
\frac{T(\widetilde{d}(t)s)}{t}=\sum_{i=1}^{\lfloor \widetilde{d}(t)s\rfloor}\widetilde{Y}_{t,i}\,.
\]
From \eqref{def_S_alpha} we know
\begin{equation}\label{thm_inf_mean_kvg_sums}
\frac{T(\widetilde{d}(t)\cdot)}{t}\cid S_\alpha(\cdot)\,, \text{ as } t\toi\,,
\end{equation}
in $D([0,\infty),[0,\infty))$ with $J_1$ topology. The limiting process $S_\alpha(\cdot)$
has the same distribution as in (\ref{def_S_alpha}). 
Notice that the generalized inverse of $T(\widetilde{d}(t)\cdot)/t$ equals
\begin{eqnarray*}
\left(\frac{T(\widetilde{d}(t)\cdot)}{t}\right)^{\leftarrow}(u)&=&\inf\left\{s:\sum_{i=1}^{\lfloor \widetilde{d}(t)s \rfloor}Y_i>tu\right\}= 
\mbox{substituting } k=\lfloor \widetilde{d}(t)s \rfloor\\
&=&\frac{\inf\left\{k\in\mathbb{N}:\sum_{i=1}^{k}Y_i>tu\right\}}{\widetilde{d}(t)}=\frac{\tau(tu)}{\widetilde{d}(t)}\,,
\end{eqnarray*}
for every $u\geq 0$, where $\tau(\cdot)$ is defined in (\ref{def_renewal_process}).
Using (\ref{thm_inf_mean_kvg_sums}) and Lemma~\ref{lemma_gen_inverse_cvg}, we apply continuous mapping theorem to obtain
\begin{equation}\label{thm_inf_mean_kvg_sums:proof_1}
\frac{\tau(t \cdot)}{\widetilde{d}(t)}\cid W_\alpha(\cdot)\,,
\end{equation}
in $D([0,\infty),[0,\infty))$ with $J_1$ topology, where $W_\alpha\eind S_\alpha^{\leftarrow}$.
Finally, the continuous mapping argument yields the joint convergence
in \eqref{thm_dep_general:claim}.

\end{proof}

Abusing the  notation somewhat, for any time period $A\subseteq [0,\infty)$
and an arbitrary  point measure $n \in M_p([0,\infty)\times\mathbb{E})$, we introduce restricted point measure
\begin{equation}\label{e:nRestr}
n\Big|_{A} 
\ \mbox{ as }\ 
n\Big|_{A\times\mathbb{E}}\,.
\end{equation}
Since the distribution of point processes $N_t$ contains the information about all upper order statistics in the sequence $(X_n)$,
it is useful to study the limit of point processes $N_t$  restricted to time intervals determined by the renewal process $(\tau(t))$.
For an illustration, denote by $M^{\tau}(t),\  t\geq 0$,  the maximum of observations $\{X_1,\ldots,X_{\tau(t)}\}$.
Note that
\[ 
\frac{M^\tau(t)-\widetilde{b}(t)}{\widetilde{a}(t)}\leq x \quad 
\mbox{ if and only if }\quad 
N_t \Big|_{\left[0,\frac{\tau(t)}{\widetilde{d}(t)}\right]\times(x,\infty]\times [0,\infty] } = 0 
\,, \]
for all suitably chosen $x$.
Using similar arguments, the following theorem yields  the limiting distribution of all extremes
until a given time $t$.

\begin{thm}\label{thm_restriction}
Assume that   \eqref{def_mu_0} holds,
then
\begin{equation}\label{e:NtRestr}
N_t \Big|_{\left[0,\frac{\tau(t)}{\widetilde{d}(t)}\right]}
\cid 
N \Big|_{\left[0,W_\alpha \right]}
\quad
\mbox{ and   }\quad
N_t \Big|_{\left[0,\frac{\tau(t)}{\widetilde{d}(t)}\right)}
\cid 
N \Big|_{\left[0,W_\alpha \right)}\,,
\end{equation}
 as $t\toi$,
where 
$N$ is a $\mathrm{PRM}(\lambda\times\mu_0)$,
and 
 $W_\alpha=W_\alpha(1)$ 
denotes the fist passage time from \eqref{kvg_first_hitting_time}.
\end{thm}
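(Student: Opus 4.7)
My strategy is to derive \eqref{e:NtRestr} from the joint convergence in Theorem~\ref{thm_dependent_general} by means of the continuous mapping theorem applied to the restriction operations $(n, w) \mapsto n|_{[0, w]}$ and $(n, w) \mapsto n|_{[0, w)}$. Writing $W_t := \tau(t)/\widetilde{d}(t)$ for brevity, Theorem~\ref{thm_dependent_general} together with Remark~\ref{rem:prva} gives $(N_t, W_t) \cid (N, W_\alpha)$ jointly. By Skorohod's representation theorem I would assume this convergence is almost sure on a common probability space, and then reduce the claim to showing
\[
\int f \, d\bigl(N_t\big|_{[0, W_t]}\bigr) \longrightarrow \int f \, d\bigl(N\big|_{[0, W_\alpha]}\bigr) \quad \text{a.s.}
\]
for every continuous, compactly supported $f\colon [0,\infty)\times\mathbb{E}\to [0,\infty)$, and similarly for the half-open restriction.

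The main obstacle is that the restriction maps are \emph{not} continuous at $(N, W_\alpha)$ in the vague topology on $M_p$: the limit $N$ carries an atom on the slice $\{t = W_\alpha\}$, namely the overshoot point $(W_\alpha, P_{i^*}, Q_{i^*})$ corresponding to the unique jump of $S_\alpha$ across level~$1$ (with indexing as in Remark~\ref{rem:prva}). Consequently, the restriction operation cannot be handled by a direct continuous mapping argument, and the boundary atom must be treated separately.

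To resolve this, I would exploit the $J_1$-convergence $T(\widetilde{d}(t)\cdot)/t \to S_\alpha$ provided by Theorem~\ref{thm_dependent_general}. Since $J_1$-convergence preserves jump sizes and locations, the jump of $T(\widetilde{d}(t)\cdot)/t$ occurring at time $W_t$, which equals $\widetilde{Y}_{t, \tau(t)}$, converges together with its location to the jump $Q_{i^*}$ of $S_\alpha$ at $W_\alpha$; combined with vague convergence $N_t \to N$, this yields
\[
(W_t,\, \widetilde{X}_{t, \tau(t)},\, \widetilde{Y}_{t, \tau(t)}) \longrightarrow (W_\alpha, P_{i^*}, Q_{i^*}) \quad \text{a.s.}
\]
Writing $N_t|_{[0, W_t]} = N_t|_{[0, W_t)} + \delta_{(W_t,\, \widetilde{X}_{t, \tau(t)},\, \widetilde{Y}_{t, \tau(t)})}$ and the analogous identity for the limit, the closed-restriction claim reduces to the open-restriction one $N_t|_{[0, W_t)} \cid N|_{[0, W_\alpha)}$. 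For the latter, the overshoot atom at $W_\alpha$ is the only atom of $N$ whose first coordinate equals $W_\alpha$, since $N$ is PRM; approximating from below by testing against $f$ further localized to $[0, W_\alpha - \delta] \times \mathbb{E}$, applying plain vague convergence of $N_t$ to $N$ on the relatively compact rectangle $[0, W_\alpha - \delta] \times K$ (whose boundary carries no $N$-mass outside a countable exceptional set of $\delta$), and letting $\delta \downarrow 0$ completes the argument.
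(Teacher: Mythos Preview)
Your approach is correct and, at the level of ideas, coincides with the paper's: both recognize that the restriction maps fail to be continuous because the limit $N$ carries an atom exactly at $W_\alpha$, and both remedy this by exploiting the $J_1$--convergence of the cumulative sums $T(\widetilde d(t)\,\cdot)/t\to S_\alpha$ to pin down that boundary atom.  The difference is one of packaging.  The paper does not carry out the Skorohod/boundary--atom analysis itself; it formulates a deterministic statement---if $(n_t,s_t)\to(n,s)$ in $M_p\times D$ and $s$ satisfies $0<s(s^\leftarrow(1)-)<1<s(s^\leftarrow(1))$ together with simplicity of $n$ in the time coordinate, then $n_t|_{[0,s_t^\leftarrow(1)]}\to n|_{[0,s^\leftarrow(1)]}$ and likewise for the open interval---and defers its proof to an adaptation of Theorem~4.1 in \cite{BB15}.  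Your steps 4--6 are essentially a sketch of how that deterministic lemma is proved, so what you gain is self--containment at the cost of citing an existing result.

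One small point on your step~6: the $\delta$--approximation you describe yields only the lower bound $\liminf_t \int f\,d(N_t|_{[0,W_t)})\geq \int f\,d(N|_{[0,W_\alpha)})$.  For the matching upper bound you should also argue that the mass of $N_t$ on $[W_\alpha-\delta,\,W_t)\times K$ (with $K=\mathrm{supp}\,f$) is, in the limit, at most $N([W_\alpha-\delta,W_\alpha+\delta]\times K)$ minus the overshoot atom you have already peeled off, and that this vanishes as $\delta\downarrow 0$.  This follows from the same vague--convergence/simplicity reasoning you already invoke, but it should be stated; the paper's regularity conditions $0<S_\alpha(W_\alpha-)<1<S_\alpha(W_\alpha)$ and $N(\{v\}\times\mathbb E\cap K)\le 1$ are exactly what make both bounds work.
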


\begin{proof}
 Observe that, the limiting point  process $N$ 
in general has a point exactly at the right end point of the 
interval $\left[0,W_\alpha \right]$. This forbids direct application
of the continuous mapping argument. Fortunately,
one can adapt the argument from the proof of Theorem 4.1
in \cite{BB15}.

Denote by $n, n_t, \  t>0$, arbitrary Radon
point measures in $M_p([0,\infty)\times \mathbb{E})$.
One  can always write
$$
 n_t = \sum_i \delta_{(v^t_i,x^t_i,y^t_i)},\  n = \sum_i \delta_{(v_i,x_i,y_i)}
$$
for some sequences $(v^t_i,x^t_i,y^t_i)$ and  $(v_i,x_i,y_i)$ 
with values in $[0,\infty)\times \mathbb{E}$.
Denote further their corresponding cumulative sum functions, by
$$
 s_t(u) = \dint_{[0,u]\times\mathbb{E}} y\, n_t(dv,dx,dy)\,,\ u \geq 0 \,,
$$
and
$$
 s(u) = \dint_{[0,u]\times\mathbb{E}} y\, n(dv,dx,dy)\,,\ u \geq 0 \,,
$$
Suppose that they take finite value for each $u>0$, but tend to infinity as $u\toi$. This makes 
$s_t$ and $s$ well defined, unbounded,
nondecreasing elements of the space of c\`adl\`ag functions $D[0,\infty)$.
Their right--continuous generalized inverses (or hitting time functions) we denote
by $s^{\leftarrow}$ and $s_t^{\leftarrow}$. 
%Assume that  \COM{1st coord. not needed?}
%\begin{equation} \label{e:convof4}
%s_t \cij s \,,
%\end{equation}
%and suppose that  $s(v)<1 $ for each $v < \tau$.
%It is known (see \cite{BB15} or Theorem 13.6.4  in Whitt~\cite{Wh02})  
%\COM{check numbering in WHitt}
%\begin{equation}
% (s_t^{\leftarrow}(1),s_t(\tau_t-),s_t(\tau_t) ) \to  (s^{\leftarrow}(1), s(\tau-) ,  s(\tau)) \,.
%\end{equation}

Assume that  
\begin{equation} \label{e:convof2}
 (n_t, s_t)
  \to (n, s)\,,
\end{equation}
in the product topology (of vague and $J_1$  topologies) as $t\toi$.
If
\begin{equation} \label{e:condons}
 0<s\Big(s^{\leftarrow}(1)-\Big)<1 <s\Big(s^{\leftarrow}(1)\Big)  \mbox{ and }  n (\{v\}\times (0,\infty)\times\mathbb{E})  \leq 1
\end{equation}
for all $v \geq 0$, by straightforward
adaptation of  Theorem 4.1 \cite{BB15}  to three--dimensional case
 \begin{equation}\label{NRestric}
  n_t\big|_{[0,s_t^{\leftarrow}(1)]}  \civ 
  n\big|_{[0,s^{\leftarrow}(1)]} \,
\quad
\mbox{ and }  \quad
  n_t\big|_{[0,s_t^{\leftarrow}(1))}  \civ 
  n\big|_{[0,s^{\leftarrow}(1))} \,.
\end{equation}
We can apply continuous mapping theorem now. Observe now that by Theorem~\ref{thm_dependent_general}
\[
\left(N_t,
\frac{T(\widetilde{d}(t)\cdot)}{t}\right)
\cid (N,S_\alpha(\cdot) )\,.
\]
On the other hand, the limiting Poisson process $N$ and  $\alpha$--stable subordinator  $S_\alpha$ satisfy
regularity assumptions \eqref{e:condons} with probability one, and therefore \eqref{e:NtRestr} holds.

\end{proof}

The
limiting distribution of all upper order statistics until a given time $t$, determined by the theorem above, can be quite complicated depending on
 the joint distribution of $N$ and $W_\alpha$ in $(\ref{thm_dep_general:claim})$. Hence, we examine two particular types of dependence between  $X_n'$s and $Y_n'$s in detail. The first of them is called \textit{the asymptotic tail independence} (see e.g.  de Haan and Ferreira~\cite{deHaan_Ferreira} and Resnick~\cite{ResEVRVPP}).
Rougly speaking, it requires that when $Y_n$ is large, there is negligible probability of $X_n$  
being large. The second type of dependence
we consider in detail is called \textit{the asymptotic full tail dependence}.
Intuitively, it implies that the $X_n$ and $Y_n$ are highly tail dependent in the sense that if one of them is large, then the other one  is also large, asymptotically with probability $1$ (see Sibuya~\cite{Sibuya60}, de Haan and Resnick~\cite{deHaan_Res77} or Resnick~\cite[pp.296--298]{ResEVRVPP}).

\subsection{Asymptotic tail independence}\label{subsection_asym_tail_indep}

Recall that  $((X_n,Y_n))$ is an iid sequence of random vectors such that $X_1\in\mathrm{MDA}(G_1)$ where $G_1=\Lambda$ or $\Phi_\beta$ with $\beta>0$ and $Y_1\sim\mathit{RegVar}(\alpha)$ for $\alpha\in(0,1)$. Hence, $Y_1\in\mathrm{MDA}(G_2)$ where $G_2=\Phi_\alpha$. 
By $F_{X,Y}$ denote the joint cdf of $(X_1,Y_1)$, and set $U_X=1/(1-F_X)$, $U_Y=1/(1-F_Y)$. Note, $U_X(X)$ and $U_Y(Y)$ are $\mathit{RegVar}(1)$ at infinity.
%, and $\widetilde{d}(t)\sim U_Y(t)$.

It is known, cf. de Haan and Resnick~\cite{deHaan_Res77}, that 
for the so-called tail independent $X_1$ and $Y_1$, the measure $\mu_0$ in (\ref{def_mu_0}) is concentrated on the axes. In particular, for $X_1\in\mathrm{MDA}(\Lambda)$ and $(x,y)\in[-\infty,\infty]\times[0,\infty]\backslash \{(-\infty,0)\}$, we have 
\begin{equation}\label{def_mu_0_Gubmel}
\mu_0\Big(([-\infty,x]\times[0,y])^c\Big)=-\log{G_1(x)}-\log{G_2(y)}=e^{-x}+y^{-\alpha} \,.
\end{equation}
If $X_1\in\mathrm{MDA}(\Phi_\beta)$ and $(x,y)\in[0,\infty]^2\backslash\{(0,0)\}$, then
\begin{equation}\label{def_mu_0_Frechet}
\mu_0\Big(([0,x]\times[0,y])^c\Big)=-\log{G_1(x)}-\log{G_2(y)}= x^{-\beta}+y^{-\alpha} \,.
\end{equation}
Let $\{N_t:t \geq 0\}$ be point processes from $(\ref{def_Nt_tilde})$. It is known that 
$ N_t \cid N \,, $
where $N$ is $\mathrm{PRM}(\lambda\times \mu_0)$ is equivalent to  $F_{X,Y}\in\mathrm{MDA}(G)$ (we refer to Resnick~\cite[Section 5.4]{ResEVRVPP} for a definition of multivariate $\mathrm{MDA}$) with
\begin{equation}\label{def_G}
G(x,y)=\begin{cases}
\exp\left\{-\mu_0\Big(([-\infty,x]\times[0,y])^c\Big)\right\}, & \ X_1\in\mathrm{MDA}(\Lambda)\\
\exp\left\{-\mu_0\Big(([0,x]\times[0,y])^c\Big)\right\}, & \ X_1\in\mathrm{MDA}(\Phi_\beta)
\end{cases}\,,
\end{equation}
and $\mu_0$ defined in $(\ref{def_mu_0_Gubmel})$ and $(\ref{def_mu_0_Frechet})$.
The measure $\mu_0$ is often called the exponent measure.

Under our assumptions, necessary and sufficient condition for \eqref{def_mu_0} can be inferred from the literature (cf. Theorem 6.2.3 in de Haan and Ferreira~\cite{deHaan_Ferreira}). Next theorem summarizes this for completeness. 

\begin{thm}\label{thm_asymptotic_independence}
For measure $\mu_0$ described in (\ref{def_mu_0_Gubmel}) and (\ref{def_mu_0_Frechet}), \eqref{def_mu_0} is equivalent to
\begin{equation}\label{cond_asymptotic_indep} 
\lim_{x\toi}P(X_1>U_X^{\leftarrow}(x)|Y_1>U_Y^{\leftarrow}(x))=0 \,.
\end{equation}
\end{thm}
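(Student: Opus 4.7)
The plan is to reduce \eqref{def_mu_0} to the condition that the joint upper tail of $(X_1,Y_1)$ is asymptotically negligible, and then recognize that condition as \eqref{cond_asymptotic_indep}.

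First I would observe that the exponent measure $\mu_0$ from \eqref{def_mu_0_Gubmel}--\eqref{def_mu_0_Frechet} is concentrated on the axes of $\mathbb{E}$. By the standard characterization of vague convergence of measures through upper-orthant rectangles (cf.\ Resnick~\cite{ResEVRVPP}), the convergence $\widetilde{d}(t) P((\widetilde{X}_{t,i},\widetilde{Y}_{t,i}) \in \cdot) \civ \mu_0$ is equivalent to three statements: \emph{(i)} $\widetilde{d}(t) P(\widetilde{X}_{t,i} > x) \to -\log G_1(x)$ for admissible $x$; \emph{(ii)} $\widetilde{d}(t) P(\widetilde{Y}_{t,i} > y) \to y^{-\alpha}$ for $y>0$; and \emph{(iii)} $\widetilde{d}(t) P(\widetilde{X}_{t,i} > x,\, \widetilde{Y}_{t,i} > y) \to 0$ for admissible $x$ and $y>0$. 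Statement (i) is just the MDA assumption on $X_1$ combined with $\widetilde{a}(t)=a(\widetilde{d}(t))$ and $\widetilde{b}(t)=b(\widetilde{d}(t))$, and (ii) follows from $Y_1\sim\mathit{RegVar}(\alpha)$ together with \eqref{def_d(t)} and \eqref{d_asympt_eq}. Consequently, the whole theorem reduces to proving that (iii) is equivalent to \eqref{cond_asymptotic_indep}.

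Second, I would recast \eqref{cond_asymptotic_indep} in multiplicative form. Since $P(Y_1 > U_Y^{\leftarrow}(x)) \sim 1/x$, condition \eqref{cond_asymptotic_indep} is equivalent to
\begin{equation*}
\lim_{x\toi} x\, P\bigl(X_1 > U_X^{\leftarrow}(x),\ Y_1 > U_Y^{\leftarrow}(x)\bigr) = 0.
\end{equation*}
This is precisely statement (iii) at the particular pair of thresholds that aligns the two marginal survival functions at the common level $1/x$. Setting $x=\widetilde{d}(t)$ and using $\widetilde{a}(t)=a(\widetilde{d}(t))$ gives the implication from \eqref{cond_asymptotic_indep} to (iii) at this symmetric pair; the converse is immediate by the same identification.

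Third, I would upgrade from the symmetric pair to arbitrary $(x,y)$ via monotonicity and the scaling properties inherited from the marginal MDA assumptions. For admissible $x$ and $y>0$ there exist constants $c(x),c(y)>0$ with $U_X^{\leftarrow}(c(x)\widetilde{d}(t)) \sim \widetilde{a}(t)x + \widetilde{b}(t)$ and $U_Y^{\leftarrow}(c(y)\widetilde{d}(t)) \sim ty$ as $t\toi$ (with $c(x)=e^x$ in the Gumbel case, $c(x)=x^\beta$ in the Fr\'echet case, and $c(y)=y^\alpha$). Applying the symmetric-level statement along the subsequence $c\widetilde{d}(t)$ with $c=\min\{c(x),c(y)\}$ and using monotonicity of the joint survival function in both coordinates then yields (iii) at the arbitrary pair $(x,y)$.

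The main obstacle is the careful alignment of the two parametrizations used: the quantile inverses $U_X^{\leftarrow}, U_Y^{\leftarrow}$ appearing in \eqref{cond_asymptotic_indep} versus the MDA-normalizing sequences $\widetilde{a}(t), \widetilde{b}(t), t$ appearing in \eqref{def_Nt_tilde}. For the Fr\'echet marginal this alignment is a power-law identity, but for the Gumbel marginal the affine rescaling $\widetilde{a}(t)x + \widetilde{b}(t)$ together with the exponential correspondence $-\log\Lambda(x) = e^{-x}$ requires the slow variation of the auxiliary function $a(\cdot)$; at this step one essentially invokes Theorem~6.2.3 of de Haan and Ferreira~\cite{deHaan_Ferreira} to ensure the correspondence is exact.
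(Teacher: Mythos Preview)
Your approach is correct but takes a different route from the paper's. The paper does not unpack the vague convergence \eqref{def_mu_0} into your conditions (i)--(iii). Instead it invokes two black boxes in succession: first, the equivalence of \eqref{def_mu_0} with $F_{X,Y}\in\mathrm{MDA}(G)$ for the product-form $G$ in \eqref{def_G}; second, Sibuya's theorem (Theorem~5 in de Haan and Resnick~\cite{deHaan_Res77}), which characterizes that MDA by the condition $\lim_{x\to\infty} P(U_X(X_1)>x \mid U_Y(Y_1)>x)=0$. The only actual work in the paper's proof is then to pass from this last condition to \eqref{cond_asymptotic_indep}, i.e.\ to replace the event $\{U_X(X_1)>x\}$ by $\{X_1>U_X^\leftarrow(x)\}$. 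That step uses the inclusions $\{U_X(X)>x\}\subseteq\{X\geq U_X^\leftarrow(x)\}$ and $\{X>U_X^\leftarrow(x)\}\subseteq\{U_X(X)>x\}$ together with $xP(X>U_X^\leftarrow(x))\to 1$ and $P(X=U_X^\leftarrow(x))/P(X\geq U_X^\leftarrow(x))\to 0$ (citing Leadbetter et al.\ and Resnick for these facts).

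Your argument avoids the MDA/Sibuya detour and this quantile-event bookkeeping, at the price of having to align the affine normalization $\widetilde a(t)x+\widetilde b(t)$ with the quantile levels $U_X^\leftarrow(c(x)\widetilde d(t))$ by hand---which, as you rightly flag, is the delicate part in the Gumbel case and is exactly where the extended regular variation underlying Theorem~6.2.3 of \cite{deHaan_Ferreira} enters. One point you skate over: the ``converse'' in your second step only yields vanishing along the range of $\widetilde d(\cdot)$, so a word is needed (monotonicity of $U_X^\leftarrow,U_Y^\leftarrow$ together with $\widetilde d(t)/\widetilde d(t')\to 1$ when $t/t'\to 1$) to extend to arbitrary $x\to\infty$. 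Overall, the paper's proof is shorter because it outsources the substance to cited theorems, while yours is more self-contained and makes the role of the joint-tail condition (iii) explicit.
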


\begin{remark} \label{rem:Ind}
If \eqref{cond_asymptotic_indep} holds, $X_1$ and $Y_1$ are called asymptotically tail independent. 
Under this condition, the limiting Poisson process $N$ in \eqref{thm_dep_general:claim}
can be decomposed into two independent parts, e.g.
for $X_1\in\mathrm{MDA}(\Lambda)$ 
\[
 N= \sum_i \delta_{(T_i,P_i,0)} + \sum_i \delta_{(T'_i,0,Q_i)}\,.
\]
This makes the restriction of $N$ to
the first two coordinates
$$
N^{(2)} = \sum_{i\geq 1} \delta_{(T_i,P_i)}.
$$
independent of $S_\alpha   =\sum_{T'_i\leq  t } Q_i$, and  $W_\alpha$ 
for the same reason.
\end{remark}

\begin{proof}
%\COM{Pro 5.10, i 5.15 Re87}
Under our assumptions, \eqref{def_mu_0} is known to be equivalent to $F_{X,Y}\in\mathrm{MDA}(G)$ for $G$ given by \eqref{def_G}.
Since $U_X(X)$ and $U_Y(Y)$ are $\mathit{RegVar}(1)$ at infinity, by Sibuya's theorem (see Theorem 5 in de Haan and Resnick~\cite{deHaan_Res77}), $F_{X,Y}\in\mathrm{MDA}(G)$ is further equivalent to
\begin{equation}\label{thm_asympt_indep:cond_proof}
\lim_{x\toi}P(U_X(X_1)>x|U_Y(Y_1)>x)=0 \,.
\end{equation}
It remains to prove the equivalence between \eqref{cond_asymptotic_indep} and \eqref{thm_asympt_indep:cond_proof}.
From Theorem 1.7.13 in Leadbetter et al.~\cite{Leadbetter83} we conclude that
\[
\frac{P( X= U_X^{\leftarrow}(x))}{P( X\geq U_X^{\leftarrow}(x))} \to 0\,,
\]
as $x\toi$.
By the proof of Proposition 5.15 in Resnick~\cite{ResEVRVPP}, on the other hand,
$
x P( X >  U_X^{\leftarrow}(x)) \to 1\,,
$
as $x\toi$.
Now, observing that
\[
\left\{ U_X(X) > x \right\} \subseteq \left\{ X \geq U_X^{\leftarrow}(x)\right\}\,
\mbox{ and }
\left\{ X > U_X^{\leftarrow}(x)\right\}  \subseteq \left\{ U_X(X) > x \right\}\,. 
\]
one can show that   \eqref{thm_asympt_indep:cond_proof} is equivalent to \eqref{cond_asymptotic_indep}.
\end{proof}

Condition $(\ref{cond_asymptotic_indep})$ does not  seem easy to verify directly in general.
In some examples one can verify a simpler sufficient condition
introduced in the following lemma (cf. condition A introduced in Anderson~\cite{Anderson87}).

\begin{lemma}\label{prop_weak_Anderson}
If
\begin{equation}\label{cond_weak_Anderson}
\lim_{x\toi}\limsup_{y\toi}P(X_1>x|Y_1>y)=0 \,,
\end{equation}
then
\begin{equation*}
\lim_{x\toi}P(X_1>U_X^{\leftarrow}(x)|Y_1>U_Y^{\leftarrow}(x))=0 \,.
\end{equation*}
\end{lemma}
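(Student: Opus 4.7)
The plan is to exploit the monotonicity of $P(X_1 > x \mid Y_1 > y)$ in $x$, together with the fact that both $U_X^{\leftarrow}(x)$ and $U_Y^{\leftarrow}(x)$ diverge as $x\to\infty$. The idea is simply that in the hypothesis \eqref{cond_weak_Anderson}, once $x$ is pushed to $U_X^{\leftarrow}(x)$ and $y$ to $U_Y^{\leftarrow}(x)$, both arguments become arbitrarily large, and the hypothesis (after unpacking the iterated limit) will give us the desired conclusion.

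First, I would record the elementary fact that $U_X^{\leftarrow}(x)\to\infty$ and $U_Y^{\leftarrow}(x)\to\infty$ as $x\to\infty$; this is immediate from $U_X=1/(1-F_X)\to\infty$ at the right endpoint of the support (and analogously for $Y$), which ensures their generalized inverses also blow up. Next, I would unpack the iterated limit in \eqref{cond_weak_Anderson}: fix an arbitrary $\varepsilon>0$, choose $x_0=x_0(\varepsilon)$ such that $\limsup_{y\to\infty} P(X_1>x_0\mid Y_1>y)<\varepsilon$, and then pick $y_0=y_0(\varepsilon,x_0)$ such that for every $y\geq y_0$,
\begin{equation*}
P(X_1>x_0\mid Y_1>y)<2\varepsilon\,.
\end{equation*}

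Now for all $x$ large enough one has simultaneously $U_X^{\leftarrow}(x)\geq x_0$ and $U_Y^{\leftarrow}(x)\geq y_0$. For such $x$, the event inclusion $\{X_1>U_X^{\leftarrow}(x)\}\subseteq\{X_1>x_0\}$ yields
\begin{equation*}
P\bigl(X_1>U_X^{\leftarrow}(x)\,\big|\,Y_1>U_Y^{\leftarrow}(x)\bigr)\leq P\bigl(X_1>x_0\,\big|\,Y_1>U_Y^{\leftarrow}(x)\bigr)<2\varepsilon\,.
\end{equation*}
Since $\varepsilon>0$ is arbitrary, the conclusion follows by letting $\varepsilon\to 0$.

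There is essentially no obstacle here: the argument is a standard $\varepsilon$-$\delta$ manipulation of an iterated limit combined with the monotonicity of tail probabilities. The only mild subtlety worth highlighting is that we must be careful to choose $y_0$ \emph{after} $x_0$ (so that the bound depends only on $x_0$), and then exploit that $U_Y^{\leftarrow}(x)$ eventually exceeds $y_0$; this is why the uniform bound $2\varepsilon$ above is legitimate.
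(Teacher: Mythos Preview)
Your proof is correct and follows essentially the same approach as the paper's: fix $\varepsilon$, choose a threshold in the $x$-variable from the outer limit, then a threshold in the $y$-variable from the inner $\limsup$, and finally use that $U_X^{\leftarrow}$ and $U_Y^{\leftarrow}$ eventually exceed those thresholds together with monotonicity of the conditional tail in $x$. The only cosmetic differences are your use of $2\varepsilon$ versus the paper's $\varepsilon$, and your explicit remark that $U_X^{\leftarrow}(x),U_Y^{\leftarrow}(x)\to\infty$, which the paper leaves implicit.
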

\begin{proof}
Assuming $(\ref{cond_weak_Anderson})$, for every $\varepsilon>0$ there exists $x_1\in\mathbb{R}$ such that for all $x\geq x_1$ we have
\[ \limsup_{y\toi}P(X_1>x|Y_1>y)<\varepsilon\,. \]
In particular,
\[ \limsup_{y\toi}P(X_1>x_1|Y_1>y)<\varepsilon \,. \]
Moreover, there exists $y_0>0$ such that for all $y>y_0$ we have
\[ P(X_1>x_1|Y_1>y_0)<\varepsilon \,.\]
If we denote $x_0=\inf\{x>0:U_X^{\leftarrow}(x)>x_1,U_Y^{\leftarrow}(x)>y_0\}$, then for all $x>x_0$ 
\[ P(X_1>U_X^{\leftarrow}(x)|Y_1>U_Y^{\leftarrow}(x))\leq P(X>x_1|Y_1>U_Y^{\leftarrow}(x))<\varepsilon \,.\]
\end{proof}
%\COM{CHECK THIS OUT}

An application of Theorem~\ref{thm_asymptotic_independence}, yields the asymptotic behavior of the $k$-th upper order statistics in a sample indexed by the renewal process $(\tau(t))$.

\begin{example} \label{ex:Ind} Let $M_k^{\tau}(t),\  t\geq 0$ represent the $k$-th upper order statistics in the sample  $\{X_1\ldots,X_{\tau(t)}\}$. Under the assumptions of Theorem~\ref{thm_asymptotic_independence} we can find the limiting distribution for suitable normalized random variables 
$M_k^\tau(t)$.
Clearly, from \eqref{e:NtRestr} we obtain
\begin{eqnarray*}
 \lefteqn{P\left(M_k^\tau(t)\leq \widetilde{a}(t)x+\widetilde{b}(t)\right)=P\left(N_t\left([0,\tau(t)/\widetilde{d}(t)]\times(x,\infty]\times \mathbb{R}_+\right)\leq k-1\right)}\\
&\to& P\Big(N\big([0,W_\alpha]\times (x,\infty]\times \mathbb{R}_+\big)\leq k-1\Big)
=E\left(\frac{\Gamma_k \left(W_\alpha\, \mu_G ((x,\infty]\times \mathbb{R}_+)\right)}{\Gamma(k)}\right)\,, 
\end{eqnarray*}
as $t\toi$, where $H_\alpha(x)=P(W_\alpha \leq x)$ represents the cdf of the random variable $W_\alpha$ and $\Gamma_k(x)$ is an incomplete gamma function. For $k=1$, i.e. for the partial maxima of the first $\tau(t)$ observations, the result first appears in Berman~\cite{Berman62}. For linearly growing $\tau(t)$
independent of the observations, a similar result can be found in Theorem 4.3.2 of
 Embrechts et al.~\cite{EKMikosch}.
\end{example}

\subsection{Asymptotic full tail dependence}\label{subsection_asym_full_dep}

In the case when observations and interarrival times are exactly equal, the limiting
behavior of the maximum has been found already by Lamperti~\cite{Lamperti61}.
We will show here that one can extend his results to study all the upper order 
statistics in a more general setting.
We keep the assumptions and notation  from subsection~\ref{subsection_asym_tail_indep}.
The main difference is that the limiting measure $\mu_0$  in (\ref{def_mu_0}) will be concentrated on 
a line, i.e. on the set
\begin{equation*}
C=\begin{cases}
\{(u,v)\in(-\infty,\infty)\times(0,\infty):e^{-u}=v^{-\alpha}\}, & \text{if } G_1=\Lambda\\
\{(u,v)\in(0,\infty)\times(0,\infty):u^{-\beta}=v^{-\alpha}\}, & \text{if } G_1=\Phi_\beta \,.
\end{cases}
\end{equation*}
More precisely, for $y>0$ and $C_{(y,\infty)}=\{(u,v)\in C: v>y\}$, the measure $\mu_0$ is determined by
\begin{equation*}
\mu_0(C_{(y,\infty)})=y^{-\alpha} \,.
\end{equation*}

Under these assumptions, necessary and sufficient condition for  (\ref{def_mu_0}) 
is the  full tail dependence condition known from the literature (cf. de Haan and Resnick~\cite{deHaan_Res77}). We again summarize this 
 for completeness. 

\begin{thm}\label{thm_asympt_full_dependence} 
%Assume that $N$ is $\mathrm{PRM}(\lambda\times \mu_0)$ where 
For measure $\mu_0$ given above,
 \eqref{def_mu_0} is equivalent to
\begin{equation}\label{cond_asympt_full_dep} 
\lim_{x\toi}P(X > U_X^{\leftarrow}(x)|Y > U_Y^{\leftarrow}(x))=1 \,.
\end{equation}
\end{thm}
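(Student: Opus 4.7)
The argument will closely parallel the proof of Theorem~\ref{thm_asymptotic_independence}; only the exponent measure and the associated extremal dependence structure differ. First I would note that under the standing assumptions, vague convergence \eqref{def_mu_0} is equivalent to $F_{X,Y}\in\mathrm{MDA}(G)$ where $G$ is the bivariate max-stable distribution whose exponent measure equals the present $\mu_0$ concentrated on $C$ with $\mu_0(C_{(y,\infty)})=y^{-\alpha}$. The plan is then to pass to standardized Pareto-type margins $U_X(X_1)$ and $U_Y(Y_1)$, both $\mathit{RegVar}(1)$ at infinity, and apply Sibuya's theorem (Theorem~5 in de Haan and Resnick~\cite{deHaan_Res77}).

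In the standardized coordinates $u=U_X(x),\, v=U_Y(y)$ the curve $C$ is mapped onto the diagonal $\{u=v\}$, and the mass $\mu_0(C_{(y,\infty)})=y^{-\alpha}$ is transported to the measure that places mass $s^{-1}$ above level $s$ on the diagonal. Sibuya's theorem then identifies membership of $F_{X,Y}$ in the multivariate MDA of $G$ with this degenerate exponent measure as the standardized full tail dependence condition
\[
\lim_{x\to\infty} P(U_X(X_1)>x \mid U_Y(Y_1)>x)=1.
\]
It remains to transfer this relation back to the original variables. This is identical to the final step in the proof of Theorem~\ref{thm_asymptotic_independence}: using Theorem~1.7.13 in Leadbetter et al.~\cite{Leadbetter83}, which gives $P(X_1=U_X^{\leftarrow}(x))/P(X_1\geq U_X^{\leftarrow}(x))\to 0$, together with Proposition~5.15 in Resnick~\cite{ResEVRVPP}, which gives $xP(X_1>U_X^{\leftarrow}(x))\to 1$, and sandwiching via the inclusions
\[
\{X_1>U_X^{\leftarrow}(x)\}\subseteq\{U_X(X_1)>x\}\subseteq\{X_1\geq U_X^{\leftarrow}(x)\}
\]
(together with the analogous inclusions for $Y_1$), one concludes that the conditional probabilities for the standardized and the original pair share the same limit, yielding \eqref{cond_asympt_full_dep}.

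The only step not obtained by direct transcription of the tail independence argument is the bookkeeping in the standardization: one must verify that under the marginal transformations ($u\mapsto e^{-u}$ in the Gumbel case or $u\mapsto u^{-\beta}$ in the Fr\'echet case, combined with $v\mapsto v^{-\alpha}$), the mass $\mu_0(C_{(y,\infty)})=y^{-\alpha}$ on $C$ pushes forward exactly to the degenerate exponent measure of mass $s^{-1}$ above level $s$ on the diagonal, so that Sibuya's theorem applies in its perfect-dependence form. I expect this to be the main, though essentially computational, obstacle.
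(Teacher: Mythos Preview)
Your outline matches the paper's proof almost verbatim: the paper simply says that the argument follows the lines of the proof of Theorem~\ref{thm_asymptotic_independence}, replacing one cited result from de Haan and Resnick~\cite{deHaan_Res77} by another. The only correction needed is that the relevant result is their Theorem~6, not Theorem~5. Sibuya's theorem (Theorem~5) is specifically the characterization of asymptotic tail \emph{independence}; it does not have a ``perfect-dependence form.'' The companion result, Theorem~6, is what identifies multivariate MDA membership for an exponent measure concentrated on the diagonal (in standardized coordinates) with the condition $\lim_{x\to\infty}P(U_X(X_1)>x\mid U_Y(Y_1)>x)=1$. With that substitution, your standardization step and the final sandwiching via the inclusions and the Leadbetter et al.\ / Resnick facts are exactly as in the paper.
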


\begin{proof}
The proof follows the lines of the proof of Theorem~\ref{thm_asymptotic_independence}, except that, instead of Theorem 5, we use Theorem 6 in de Haan and Resnick~\cite{deHaan_Res77}).
\end{proof}

For an application of Theorem~\ref{thm_asympt_full_dependence} we refer to section~\ref{section_excursions}, where the problem of the longest excursion of the continuous time random walk is considered.
%\COM{maybe we should add here a comment/example about $X=Y$ case,
%also in the case $X$ and $Y$ are tail equivalent, as.ind. and full dep.
%are easier to check!?!}

\section{Excursions and sojourn times of continuous time random walk}
\label{Sec:CTRW}

In this section we use results from section 3 to obtain the limiting distribution of extremely long sojourn times at level zero of a CTRW. We consider a CTRW which is a simple symmetric random walk subordinated to a certain renewal process. Hence, jumps are always of magnitude one, while waiting times between jumps correspond to interarrival times of the subordinating renewal process.

Let $(E_n)_{n\geq 1}$ be an iid sequence of non-negative random variables with finite expectation. Suppose $E_1\in\mathrm{MDA}(G)$ where $G=\Lambda$ or $G=(\Phi_{\beta})$, for $\beta > 1$. Denote the partial sum of the sequence $(E_n)$ by $T(n)=\sum_{i=1}^{n}E_i$ and set $T(0)=0$. Additionally, let $(N(t))_{t\geq 0}$ denote a renewal process generated by the sequence $(E_n)$, that is
\[ N(t)=\max\left\{k\geq 0: T(k)=\sum_{i=1}^{k}E_i\leq t\right\} \,.\]
The sequence $(E_n)$ models the waiting times between jumps, whereas the renewal process $(N(t))$ counts the number of jumps up to a time $t$. Furthermore, let $(\varepsilon_n)_{n\geq 1}$ be the iid sequence of Rademacher random variables, that is $P(\varepsilon_1=1)=P(\varepsilon_1=-1)=1/2$.  Let $S_n=\sum_{i=1}^{n}\varepsilon_i$ denote the partial sums of $(\varepsilon_n)$. Set $S_0=0$ and define continuous time random walk
as the process 
\[ Z(t)=S_{N(t)}=\sum_{i=1}^{N(t)}\varepsilon_i \,,\,\quad  t\geq 0 \,.\] 
Our goal is to determine the limiting distribution of the longest time interval during which CTRW $Z(\cdot)$ remains at level zero before time $t$, including the last 
possibly incomplete part. With that in mind, we define some auxiliary random variables.

\subsection{The longest sojourn time at level zero}\label{section_sojourn_times}

Let $X_1$ denote the time spent during the first visit to the origin. Clearly, $X_1=E_1$. Since $Z(t)=0$ for all $t\in[0,E_1)$, and $Z(E_1)\neq 0$, the time of the first return to the origin is defined as
\[ A_1=\inf\{t \geq X_1 : Z(t)=0\} \,.\]
Further, if we set $Y_1=A_1$, then $Y_1$ represents the sum of the first sojourn time
 at the origin and the time spent in the first excursion away from the origin. Clearly, duration of the first excursion, denoted by $R_1$, satisfies $R_1=Y_1-X_1$.
If we set $A_0=0$, then for $i\geq 1$, the time spent during the $i$-th visit at the origin or the time spent on the $i$-th excursion can be defined recursively as
\begin{eqnarray*}
X_i&=&\inf\{t\geq A_{i-1}: Z(t)\neq 0\}-A_{i-1}=E_{N(A_{i-1})+1}\,,\\
A_i&=&\inf\{t\geq A_{i-1}+X_i:Z(t)=0\}\,,\\
Y_i&=&A_i-A_{i-1}\,,\\
R_i&=&Y_i-X_i\,.
\end{eqnarray*}
An illustration of these random variables is given in Figure~\ref{CTRW_figure}.
\begin{figure}[h]\label{CTRW_figure}
\begin{center}
 \includegraphics[scale=0.7]{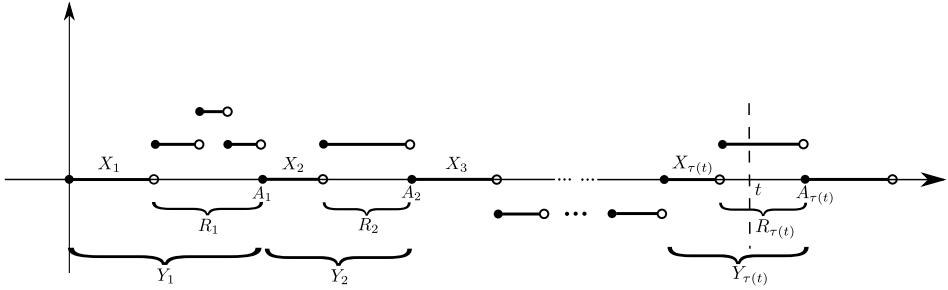}
\end{center}
\caption{A sketch of CTRW and associated random variables $X_i$, $A_i$, $Y_i$, $R_i$.}
\end{figure}

Clearly, the  sequence $(X_n)_{n\geq 1}$ defined above  is iid. Moreover, $X_1$ belongs to the same MDA as $E_1$. Since, $(Y_n)_{n\geq 1}$ is iid sequence of non-negative random variables, the renewal process
\begin{equation}\label{Exmpl_CTRW_def_tau}
 \tau(t)=\inf\left\{ k\geq 1:\sum_{i=1}^k Y_i > t \right\} \ , \, t\geq 0\,,
\end{equation}
is well defined.

To apply Theorem~\ref{thm_restriction} we need to determine the joint tail behavior of random variables $Y_i$ and $R_i$. Note that $(R_n)_{n\geq 1}$ is an iid sequence of non-negative random variables, and for $u>0$, it satisfies
\[ P(R_1>u)=P\left(\sum_{i=2}^{K}E_i>u\right) \,,\]
where $K=\inf\{m\geq 1: S_m=0\}$, and $S_n=\sum_{i=1}^{n}\varepsilon_i$.
 It is well known (see e.g. Durrett~\cite[Section 4.3]{Durrett2010}) that $K\sim \mathit{RegVar}(1/2)$. Therefore, for instance, by Proposition 4.3. in Fa\"y et al.~\cite{Genna06} 
\[ P(R_1>u)\sim E(\sqrt{E_1}) P(K>u) \,, \]
as $u\toi$. In particular, $R_1\sim \mathit{RegVar}(1/2)$, and, consequently, $Y_1\sim \mathit{RegVar}(1/2)$.

To verify  condition \eqref{cond_weak_Anderson}, observe that $Y_1 = R_1 +X_1 $ with $R_1$ and $X_1$ independent. Therefore
\[
\frac{P(Y_1>y,X_1>x)}{P(Y_1>y)} \leq
\frac{P(R_1>y - y ^{2/3})\, P(X_1>x)}{P(Y_1>y)}+
\frac{P(X_1> y ^{2/3})}{P(Y_1>y)}\,.
\]
The second term on the right tends to 0 as $y\toi$ because $X_1$ has finite mean and
$Y_1\sim \mathit{RegVar}(1/2)$,
while the first term is bounded by $P(X_1>x)$, which clearly tends to 0, if we let first $y$,
then $x\toi$. 

If we construct point processes $N_t$, $t\geq 0$ as in (\ref{def_Nt_tilde}), observe that functions $d$, $\widetilde{d}$ are regularly varying with indices $2$ and $1/2$, respectively.
Now, using Theorems~\ref{thm_dependent_general} and~\ref{thm_asymptotic_independence} together with Lemma~\ref{prop_weak_Anderson}, we obtain 
\begin{equation}\label{Exmpl_CTRW_lng_stay_joint_cvg}
 \left(N_t,\frac{\tau(t)}{\widetilde{d}(t)}\right)\cid \left(N,W_{1/2}\right)\,,
\end{equation}
as $t\toi$, where $N\sim \mathrm{PRM}(\lambda\times \mu_0)$ 
is such that 
$N^{(2)}$ (see Remark~\ref{rem:Ind}) is
independent of the random variable $W_{1/2}$.

The following  theorem describes the asymptotic distribution of the longest sojourn time of CTRW at level zero. Denote the longest sojourn time at level 0 up to time $t$ by $Q(t)$. First observe that, if
\[ \sum_{i=1}^{\tau(t)-1}Y_i+X_{\tau(t)}<t \,,\] 
$Q(t)$ simply equals  $M^\tau(t)=\max\{X_1,\ldots , X_{\tau(t)} \}$.
On the other hand, if
\[ t \leq \sum_{i=1}^{\tau(t)-1}Y_i+X_{\tau(t)} \,,\] 
\[ 
Q(t) = \max\left\{M^{\tau-1}(t),t-\sum_{i=1}^{\tau(t)-1}Y_i\right\}\,.
\]
In either case 
\begin{equation}\label{eq:MQM}
 M^{\tau -1} (t) \leq  Q(t) \leq  M^{\tau } (t)\,.
\end{equation}
Observe further that the random events
\[
\left\{\frac{M^\tau(t)-\widetilde{b}(t)}{\widetilde{a}(t)} \leq x \right\}\,
\ \mbox{ and } \ 
\left\{\frac{M^{\tau-1}(t)-\widetilde{b}(t)}{\widetilde{a}(t)} \leq x \right\}
\]
correspond to
\[
\left\{N_t\Big|_{\left[0,\frac{\tau(t)}{\widetilde{d}(t)}\right]\times(x,\infty]\times[0,\infty]}=0\right\}
\ \mbox{ and } \ 
\left\{N_t\Big|_{\left[0,\frac{\tau(t)}{\widetilde{d}(t)}\right)\times(x,\infty]\times[0,\infty]}=0\right\}\,.
\]
%However, ${\tau(t)}/{\widetilde{d}(t)}$ and ${(\tau(t)-1)}/{\widetilde{d}(t)}$
%have the same limit  in distribution, denoted by $W_{1/2}$.
%Since $N^{(2)}$ and $W_{1/2}$ are 
%independent (see Remark~\ref{rem:Ind}),
%the probabilities of the last two events  have the same limit
%by Lemma 1 in~\cite{BSSPL}.

\begin{thm}
Under the assumptions  above
\begin{equation*}
P\left(\frac{Q(t)-\widetilde{b}(t)}{\widetilde{a}(t)}\leq x\right)\to E\left(G(x)^{W_{1/2}}\right)\,,
\end{equation*}
as $t\toi$.
\end{thm}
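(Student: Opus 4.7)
The plan is to exploit the sandwich \eqref{eq:MQM}, $M^{\tau-1}(t)\leq Q(t)\leq M^\tau(t)$, by verifying that the two bounding maxima, normalized by the same affine map $(\cdot-\widetilde{b}(t))/\widetilde{a}(t)$, converge in distribution to the same mixed extreme value law $x\mapsto E(G(x)^{W_{1/2}})$. A standard squeeze then delivers the claim, since $G$ is continuous and so the limiting cdf is continuous.

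For the upper bound I would use the point process identity displayed immediately before the theorem,
\[
P\!\left(\frac{M^\tau(t)-\widetilde{b}(t)}{\widetilde{a}(t)}\leq x\right)
=P\!\left(N_t\Big|_{[0,\tau(t)/\widetilde{d}(t)]\times(x,\infty]\times[0,\infty]}=0\right),
\]
and apply the closed-interval part of Theorem~\ref{thm_restriction}. The zero-cell functional is almost surely continuous at the limit whenever $x$ is a continuity point of $G$, because then $\mu_0$ assigns zero mass to the boundary of $(x,\infty]\times[0,\infty]$, so the continuous mapping theorem yields convergence of the right-hand side to $P(N([0,W_{1/2}]\times(x,\infty]\times[0,\infty])=0)$. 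Under asymptotic tail independence, already established above via Lemma~\ref{prop_weak_Anderson}, the decomposition in Remark~\ref{rem:Ind} makes the projection $N^{(2)}$ independent of $W_{1/2}$ (this is precisely the content of \eqref{Exmpl_CTRW_lng_stay_joint_cvg}). Conditioning on $W_{1/2}=w$, the number of atoms of $N$ in $[0,w]\times(x,\infty]\times[0,\infty]$ is Poisson with mean $w\cdot\mu_0((x,\infty]\times[0,\infty])=-w\log G(x)$, so the conditional no-atom probability is $G(x)^{w}$, and unconditioning gives the limit $E(G(x)^{W_{1/2}})$.

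For the lower bound I would repeat the computation with the half-open restriction from the second statement of Theorem~\ref{thm_restriction}. The main obstacle, and essentially the only nonroutine step, is that $N$ almost surely carries an atom at the random right endpoint $W_{1/2}$, coming from the very jump that first pushes the $\alpha$-stable subordinator above level $1$. Under tail independence, however, the decomposition in Remark~\ref{rem:Ind} places this endpoint atom on the axis associated with the $Y$-coordinate, so its $X$-coordinate sits at the lower endpoint of the support of $G$ and in particular fails to lie in $(x,\infty]$ for any continuity point $x$ of $G$. Consequently the endpoint atom does not contribute to the counting functional, the closed and half-open restrictions yield the same limit, and both $M^{\tau-1}(t)$ and $M^\tau(t)$ converge weakly to $E(G(x)^{W_{1/2}})$, which by the sandwich forces the same limit for $Q(t)$.
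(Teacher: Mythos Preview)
Your proposal is correct and follows essentially the same route as the paper: both arguments use the sandwich \eqref{eq:MQM}, invoke the two parts of Theorem~\ref{thm_restriction} to handle $M^{\tau}(t)$ and $M^{\tau-1}(t)$ via the closed and half-open restrictions, and then exploit the tail-independence decomposition of Remark~\ref{rem:Ind} to see that the endpoint atom contributes nothing to the count over $(x,\infty]\times[0,\infty]$, so the two limiting probabilities coincide with $E(G(x)^{W_{1/2}})$. Your write-up merely makes explicit why the atom at $W_{1/2}$ is harmless (its $X$-coordinate sits at the lower endpoint of the support of $G$), a point the paper leaves implicit in the passage from $N$ to $N^{(2)}$.
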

\begin{proof}
%we have
%\[ 
% P\left(\frac{M^\tau(t)-\widetilde{b}(t)}{\widetilde{a}(t)}\leq x\right)= P\left(N_t\Big\vert_{\left[0,\frac{\tau(t)}{\widetilde{d}(t)}\right]\times (x,\infty]\times[0,\infty]}=0\right) \,.
%\]
Using (\ref{Exmpl_CTRW_lng_stay_joint_cvg}) and Theorem~\ref{thm_restriction}, 
by the discussion before the theorem for an arbitrary $x\in \mathbb{R}$,  we obtain
\[
P\left(\frac{M^\tau(t)-\widetilde{b}(t)}{\widetilde{a}(t)}\leq x\right)
\to P\left(N\Big\vert_{\left[0,W_{1/2}\right]\times (x,\infty]\times[0,\infty]}=0\right)\,, \]
as $t\toi$, and similarly
\[
 P\left(\frac{M^{\tau-1}(t)-\widetilde{b}(t)}{\widetilde{a}(t)}\leq x\right)
 \to P\left(N\Big\vert_{\left[0,W_{1/2}\right)\times (x,\infty]\times[0,\infty]}=0\right)\,.
 \]
By the independence between $N^{(2)}$ and $W_{1/2}$, see Remark~\ref{rem:Ind}, both limiting probabilities above equal
\begin{eqnarray*}
\lefteqn{P\left(N^{(2)}\Big\vert_{\left[0,W_{1/2}\right]\times (x,\infty]}=0\right)}\\
&=&\int_{0}^{+\infty}e^{-s\mu_0\left(\big([-\infty,x]\times[0,\infty]\big)^c\right)}dF_{W_{1/2}}(s)=\int_{0}^{+\infty}G(x)^{s}dF_{W_{1/2}}(s)\\
&=&E\left(G(x)^{W_{1/2}}\right)\,.
\end{eqnarray*}
By \eqref{eq:MQM}
\[
P\left(\frac{Q(t)-\widetilde{b}(t)}{\widetilde{a}(t)}\leq x\right)
\to E\left(G(x)^{W_{1/2}}\right)\,,
\]
as $t\toi$.
\end{proof}

Similarly, one can use (\ref{Exmpl_CTRW_lng_stay_joint_cvg}) to obtain the joint distribution of the two longest sojourn times of CTRW. Denote by $Q(t)$ and by $Q'(t)$  the longest and the second longest sojourn times until time $t$. Fix levels $u_1>u_2>0$. Observe that
\begin{eqnarray*}
\lefteqn{P\left(\frac{Q(t)-\widetilde{b}(t)}{\widetilde{a}(t)}\leq u_1,\frac{Q'(t)-\widetilde{b}(t)}{\widetilde{a}(t)}\leq u_2\right)}\\
&=&P\Bigg(N_t\Big([0,\tfrac{\tau(t)}{\widetilde{d}(t)}]\times(u_1,\infty]\times[0,\infty]\Big)=0,N_t\Big([0,\tfrac{\tau(t)}{\widetilde{d}(t)}]\times(u_2,\infty]\times[0,\infty]\Big)\leq 1\Bigg) \,.
\end{eqnarray*}
Now one can apply 
(\ref{Exmpl_CTRW_lng_stay_joint_cvg}) and Theorem~\ref{thm_restriction} to show that the last
expression converges to 
\[
 E\left(G(u_2)^{W_{1/2}}\right)+\mu_G(u_2,u_1]E\left(W_{1/2}G(u_2)^{W_{1/2}}\right)\,. 
\]

\subsection{The longest excursion}\label{section_excursions}

Our next goal is to determine the limiting distribution of the length of the longest excursion completed until time $t$, i.e. the longest time interval during which CTRW $Z(t)$ is not equal to zero, completed until time $t$. 
In contrast to subsection~\ref{section_sojourn_times}, here we only assume  that $E_i$'s 
have finite expectation.
As in subsection~\ref{section_sojourn_times}, we denote by $R_i$ the time spent on the $i$-th excursion, and by $Y_i$ the total time spent on the $i$-th stay at zero and the $i$-th excursion. Now, we are interested in determining the limiting distribution of 
\begin{equation}\label{Exmpl_lon_excursion_def_M_tau}
M^{\tau-1}(t)=\sup\{R_i:i \leq \tau(t)-1\}\,,
\end{equation}
with $\tau(t)$  given in (\ref{Exmpl_CTRW_def_tau}), which
 corresponds to the length of the longest completed excursion.

In the present model, the point processes ${N}_t$ of Theorem~\ref{thm_asympt_full_dependence} are
constructed using
 sequences $(R_n)$ (instead of $(X_n)$) and $(Y_n)$. Recall that $U_R=1/(1-F_R)$, and
 observe that $ U_R^{\leftarrow}(x) \leq U_Y^{\leftarrow}(x)$ for all $x$, since
 $Y_1=R_1+X_1$. 
 To show \eqref{cond_asympt_full_dep},
 observe that
\[
 x P( Y_1>U_Y^{\leftarrow}(x)) \to 1\,,
\] 
and, since $Y_1$ is $\mathit{RegVar}(1/2)$, it is well know that the same holds for
$R_1=Y_1-X_1$ because $EX_1<\infty$. Moreover, one can show that
\[
x P( R_1 >U_Y^{\leftarrow}(x)) =  x P( Y_1-X_1 >U_Y^{\leftarrow}(x)) \to 1\,.
\]
Clearly,
 \begin{eqnarray*}
 \lefteqn{ P(R_1>U_R^{\leftarrow}(x)\,|\, Y_1>U_Y^{\leftarrow}(x))  }\\
 & = &
 \frac{P(R_1>U_R^{\leftarrow}(x)\,, R_1+X_1>U_Y^{\leftarrow}(x)) }{P(Y_1>U_Y^{\leftarrow}(x))}
 \geq \frac{P(R_1>U_Y^{\leftarrow}(x)) }{P(Y_1>U_Y^{\leftarrow}(x))}\,.
 \end{eqnarray*}
Since the last ratio above tends to 1 as $x\toi$, \eqref{cond_asympt_full_dep} is proved.

From Theorems~\ref{thm_restriction} and \ref{thm_asympt_full_dependence}, we obtain
\begin{equation}\label{Exmpl_lon_excursion_joint_cvg}
N_t \Big|_{\left[0,\frac{\tau(t)}{\widetilde{d}(t)}\right)}
\cid 
N \Big|_{\left[0,W_{1/2} \right)}
\end{equation}
%\left({N}_t,\frac{\tau(t)}{\widetilde{d}(t)}\right)\cid \left({N},W_{1/2}\right)\,,
as $t\toi$. In this case,
\[
{N} = \sum_{i\geq 1} \delta_{(T_i,P_i,P_i)}\,,
\]
while 
$
W_{1/2}= W_{1/2}(1) = S_{1/2}^{\leftarrow}(1)
$
with $S_{1/2}$ equal to the $1/2$--stable subordinator
\[
S_{1/2}(t) = \sum_{T_i\leq t} P_i \,,
\]
which makes $W_{1/2}$ completely  dependent on
$
{N}^{(2)} =   \sum_{i\geq 1} \delta_{(T_i,P_i)}\,,
$
cf. Remark~\ref{rem:Ind}.

Observe that (\ref{Exmpl_lon_excursion_joint_cvg}) identifies the limiting distribution of all upper order statistics in the sequence of excursions. It follows that rescaled excursions behave asymptotically as 
completed jumps of the subordinator  $S_{1/2}$ until the passage of
the level $1$. Distribution of those jumps is well understood, 
see  Perman~\cite{Perman93} or
Pitman and Yor~\cite{PitmanYor97}. 
Alternatively, one could include the last, possibly incomplete excursion in the
analysis. It turns out that the convergence result still holds,
but one needs to add the last a.s. incomplete jump of the subordinator $S_{1/2}$
to the limiting distribution. All those jumps have the distribution of the Pitman--Yor
point processes, see \cite{Perman93,PitmanYor97}.

%Recall that 
%the jumps of $1/2$--stable subordinator correspond to the lengths of excursions of Brownian motion, as discussed in many papers, see for instance Mol{\v{c}}anov and Ostrovski{\u\i}~\cite{Molcanov69}, Pitman and Yor~\cite{PitmanYor97} or Perman~\cite{Perman93}.

To illustrate our claims, the next theorem gives the limiting distribution of $M^{\tau-1}(t)$.

\begin{thm} \label{Thm:Exc}
Under the assumptions above, $M^{\tau-1}(t)$  in (\ref{Exmpl_lon_excursion_def_M_tau})
satisfies
\begin{equation*}
\frac{M^{\tau-1}(t)}{t}\cid V\,,
\end{equation*}
as $t\toi$, where $V$ has the distribution of the largest jump of $1/2$--stable suboordinator
completed before the passage of the level 1.
\end{thm}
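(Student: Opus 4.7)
The plan is to extract Theorem~\ref{Thm:Exc} from the point process convergence \eqref{Exmpl_lon_excursion_joint_cvg} via a continuous--mapping argument on void probabilities. For any fixed $x>0$, mimicking the computation preceding the statement, I would rewrite the cdf of the normalized longest excursion as
\[
 \Big\{\frac{M^{\tau-1}(t)}{\widetilde{a}(t)}\leq x\Big\}
 = \Big\{N_t\big|_{\left[0,\tau(t)/\widetilde{d}(t)\right)\times(x,\infty]\times[0,\infty]}=0\Big\}\,,
\]
turning the left--hand side into a void probability for the restricted point process. The void--probability functional $n\mapsto\mathbf{1}\{n(B)=0\}$ is continuous in vague topology at any $n$ putting no mass on $\partial B$, so applying it to \eqref{Exmpl_lon_excursion_joint_cvg} gives convergence of the above probability to $P(N|_{[0,W_{1/2})\times(x,\infty]\times[0,\infty]}=0)$. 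The relevant boundary parts are $\{u=W_{1/2}\}$ and $\{p=x\}$; since $W_{1/2}$ has a continuous (Mittag--Leffler) law and $\mu_0$ is diffuse on the diagonal $\{u=v\}$, both pieces are a.s.\ $N$--null, so the continuity hypothesis is met for every $x>0$.

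Next I would identify the limit. By the full tail dependence established in section~\ref{section_excursions}, the limiting Poisson process has the collapsed form $N=\sum_i\delta_{(T_i,P_i,P_i)}$, so the $P_i$'s are simultaneously the marks being maximized and the jumps of the $1/2$--stable subordinator $S_{1/2}(u)=\sum_{T_i\leq u}P_i$. The void--probability event above therefore reads: every jump of $S_{1/2}$ completed strictly before the first--passage time $W_{1/2}=S_{1/2}^{\leftarrow}(1)$ has size at most $x$. This is exactly $P(V\leq x)$, giving $M^{\tau-1}(t)/\widetilde{a}(t)\cid V$. A direct computation using $n(1-F_Y(d_n))\to 1$ together with $\widetilde{a}(t)=a(\widetilde{d}(t))$ shows $\widetilde{a}(t)/t\to 1$ in the present Fr\'echet $\Phi_{1/2}$ setup, exploiting that $P(R_1>y)$ and $P(Y_1>y)$ are tail equivalent because $EX_1<\infty$; this replaces $\widetilde{a}(t)$ by $t$ and finishes the proof.

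The step I expect to be most delicate is the careful handling of the half--open right endpoint. The limiting process $N$ almost surely carries a point at the random time $T_i=W_{1/2}$ (the jump of $S_{1/2}$ that overshoots level $1$), which must be excluded both from the pre--limit index set $\{1,\ldots,\tau(t)-1\}$ and from the limiting maximum. This is the very reason Theorem~\ref{thm_restriction} was stated with both $[0,W_\alpha]$ and $[0,W_\alpha)$ versions, and using the half--open version is what makes the limiting void probability coincide with the cdf of $V$; the rest of the argument is then routine.
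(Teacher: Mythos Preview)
Your proposal is correct and follows essentially the same route as the paper: rewrite the distribution function of $M^{\tau-1}(t)/\widetilde a(t)$ as a void probability for the restricted point process, invoke \eqref{Exmpl_lon_excursion_joint_cvg} (which is the half--open version of Theorem~\ref{thm_restriction}), and then read off the limit from the collapsed representation $N=\sum_i\delta_{(T_i,P_i,P_i)}$.

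Two minor remarks. First, once you apply the void--probability functional to the already restricted processes in \eqref{Exmpl_lon_excursion_joint_cvg}, the only boundary that matters is $\{p=x\}$; the time boundary $\{u=W_{1/2}\}$ has already been absorbed into Theorem~\ref{thm_restriction}, so your comment about the Mittag--Leffler law is superfluous (and indeed $N$ \emph{does} put a point at $u=W_{1/2}$, as you yourself observe in the final paragraph). Second, you make explicit the passage from $\widetilde a(t)$ to $t$ via tail equivalence of $R_1$ and $Y_1$ and the relation $d(\widetilde d(t))\sim t$; the paper's proof writes $\widetilde a(t)$ in the display but states the theorem with $t$, leaving this step implicit, so your version is actually more complete on that point.
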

 
\begin{proof}
Observe that for arbitrary $x>0$
\[ P\left(\frac{M^{\tau-1}(t)}{\widetilde{a}(t)}\leq x\right)=P\left({N}_t\Big|_{\left[0,\frac{\tau(t)}{\widetilde{d}(t)}\right)\times (x,\infty]\times[0,\infty]}=0\right) \, \]
Theorem~\ref{thm_restriction} yields  \eqref{Exmpl_lon_excursion_joint_cvg}, and therefore
%\[
%N_t \Big|_{\left[0,\frac{\tau(t)}{\widetilde{d}(t)}\right)}
%\cid 
%N \Big|_{\left[0,W_{1/2} \right)}\,,
%\]
%as $t\toi$, and therefore
\[ P\left({N}_t\Big|_{\left[0,\frac{\tau(t)}{\widetilde{d}(t)}\right)\times  (x,\infty]\times[0,\infty]}=0\right)\to P\left({N}\Big|_{[0,W_{1/2})\times (x,\infty]\times[0,\infty]}=0\right) \,. \]
According to the discussion following 
\eqref{Exmpl_lon_excursion_joint_cvg},
\[
\left\{{N}\Big|_{[0,W_{1/2})\times (x,\infty]\times[0,\infty]}=0\right\}
= \left\{ \sup_{T_i < W_{1/2}} P_i\leq x \right\}\,,
\]
which is exactly the probability that the largest jump of $1/2$-stable subordinator, before it hits $[1,\infty)$, is less than or equal to $x$. 
\end{proof}

The limiting random variable $V$ in Theorem~\ref{Thm:Exc} has a continuous distribution
with support on $(0,1)$, see  Perman~\cite{Perman93}. This is also the distribution of the longest
completed excursion of the standard Brownian  motion during
time interval $[0,1]$.
Its density  is given in \cite[Corollary 9]{Perman93}. Interestingly, the closed form expression
for the density is known only on the interval $({1}/{3},1)$.

In the special case when all waiting times of CTRW have unit length, the model above boils down to  the simple symmetric random walk on integers. In particular, from \eqref{Exmpl_lon_excursion_joint_cvg} one can deduce  the asymptotic distribution of all upper order statistics for the length of excursions of the simple symmetric random walk  given in Cs\'aki and Hu~\cite{Csaki2003}.

\ack
This work has been supported in part by Croatian Science Foundation under the project 3526.

% Reference list
%
% References should be in the following form (or the BibTeX file
% apt.bst should be used):
%
% For a journal:
% Surname, Initial (year). Title of paper. {\em Journal title}
% {\bf Vol,} page--range.
%
% For a book:
% Surname, Initial (year). {\em Book title}. Publisher, Address.
%
% Note the following example of a reference list.
\bibliographystyle{apt}
\bibliography{mybib_initials}{}

\end{document}